\documentclass{article}

\usepackage[margin=1in]{geometry}
\usepackage{amsmath,amssymb,amsthm}
\usepackage{authblk}
\usepackage{tikz}
\usetikzlibrary{decorations.pathreplacing}

\numberwithin{equation}{section}

\usepackage[hidelinks,pagebackref,hypertexnames=false]{hyperref}

\newtheorem{theorem}{Theorem}[section]
\newtheorem{lemma}[theorem]{Lemma}

\begin{document}

\title{Metric entropy of Fourier ratio classes on ${\mathbb Z}_N$}

\author[1]{A.~Iosevich}
\author[2]{V.~Hovhannisyan}
\author[3]{Z.~Keyshams}
\author[4]{A.~Vagharshakyan}

\affil[1]{\small{University of Rochester, NY, USA, \texttt{iosevich@math.rochester.edu}}} 
\affil[2]{\small{Yerevan State University, Yerevan, Armenia, \texttt{vahagn.hv@gmail.com}}}
\affil[3]{\small{Yerevan State University, Yerevan, Armenia, \texttt{zahra.keyshams@ysu.am}}}
\affil[4]{\small{Institute of Mathematics NAS and Yerevan State University, Yerevan, Armenia, \texttt{avaghars@kent.edu}}}

\date{\today}


\date{}

\maketitle
{\def\thefootnote{}
\footnotetext{This work was supported by the Higher Education and Science Committee of RA (Research Project No 24RL-1A028).}
}

\begin{abstract}
We study metric entropy and uniform sampling for classes of signals on ${\mathbb Z}_N$ with prescribed Fourier ratio. The Fourier ratio measures how spread out the Fourier transform of a signal is, interpolating between sparse spectral support and nearly uniform spectral distribution.

Our main result gives upper and lower bounds for the metric entropy of a Fourier-ratio layer of size $r$. At any sufficiently small fixed covering scale, these bounds match in their dependence on $r$ and $N$ and show that $FR(f)^2$ acts as an effective dimension parameter governing the size of the class. We use the entropy estimate to obtain uniform bounds for empirical approximation over Fourier-ratio classes.

We also establish a phase-orbit packing result. If a single signal has a flat spectral block of size $k$, then phase perturbations of that signal generate an exponentially large family with the same Fourier ratio and positive $\ell^2$ separation.

Together, these results show that the Fourier ratio governs not only approximation properties of individual signals, but also the geometric size and uniform sampling behavior of entire signal classes.

\textbf{MSC 2020:} 41A25 (primary); 42A10, 94A12 (secondary)

\textbf{Keywords:} metric entropy, Fourier analysis, trigonometric approximation, Fourier ratio, sampling complexity

\end{abstract}

\section{Introduction}

Let $f:{\mathbb Z}_N \to {\mathbb C}$ be a signal and let
$$
\widehat{f}(m)
=
N^{-\frac{1}{2}}
\sum_{x \in {\mathbb Z}_N}
e^{-2\pi i xm/N} f(x)
$$
denote its discrete Fourier transform. A central problem in harmonic analysis and signal processing is to understand how the distribution of Fourier coefficients controls the complexity of a signal, including its approximation, stability under sampling, and recoverability from incomplete data. The goal of this paper is to identify a simple scalar quantity that captures this complexity and governs the size and statistical behavior of natural classes of signals.

A useful scalar quantity that has recently emerged in this context is the Fourier ratio
$$
FR(f)=\frac{\left|\left|\widehat{f}\right|\right|_{\ell^1}}{\left|\left|\widehat{f}\right|\right|_{\ell^2}},
$$
which takes values in the interval $\left[1,\sqrt{N}\right]$. See, for example, \cite{AldalehEtAl}, \cite{BursteinIosevichNathan}, \cite{IosevichGupta}, and the references contained therein. This quantity interpolates between extreme regimes. When $\widehat{f}$ is supported on a single frequency, $FR(f)=1$. When $\left|\widehat{f}\right|$ is spread uniformly, $FR(f)$ is of order $\sqrt{N}$. In this sense, the Fourier ratio provides a quantitative measure of complexity that captures the transition between spectral sparsity and spectral uniformity.

Recent work shows that the Fourier ratio governs several fundamental approximation-theoretic and structural properties of signals. In particular, \cite{AldalehEtAl} introduced the Fourier ratio and demonstrated that small Fourier ratio implies efficient approximation by low-degree trigonometric polynomials, stability under sampling and perturbation, and additive structure of large spectrum sets. Subsequently, \cite{BursteinIosevichNathan} extended these ideas to algorithmic and learning-theoretic settings, showing that the Fourier ratio controls Kolmogorov rate-distortion complexity and statistical query dimension. These results identify $FR(f)^2$ as a parameter controlling nonlinear approximation, compressibility, and structural behavior.

The purpose of the present paper is to complement these investigations by showing that the Fourier ratio also governs the geometric size and statistical complexity of entire classes of signals. Specifically, we consider the class
$$
C_2(r)
=
\left\lbrace
f:{\mathbb Z}_N \to {\mathbb C} :
\left|\left|f\right|\right|_{\ell^2}=1,
\quad
r \leq \left|\left|\widehat{f}\right|\right|_{\ell^1} \leq 2r
\right\rbrace.
$$

The use of the interval $[r,2r]$ is only a dyadic localization. More general classes with
$$
a \leq \left|\left|\widehat f\right|\right|_{\ell^1} \leq b
$$
can be treated by the same arguments when $a$ and $b$ are comparable, with constants depending only on the comparability ratio $b/a$.

For a metric space $(\mathcal{F}, d)$, let $N_\epsilon(\mathcal{F}, d)$ denote the minimal cardinality of an $\epsilon$-net for $\mathcal{F}$ whose centers belong to $\mathcal{F}$. When the metric is clear from context, we write simply $N_\epsilon(\mathcal{F})$.

We study the metric entropy (i.e., the logarithm of the covering number) and sampling properties of this class. By unitarity of the Fourier transform, this class is equivalent to the class of vectors in ${\mathbb C}^N$ with unit $\ell^2$ norm and $\ell^1$ norm of order $r$. A simple model case is provided by a signal whose Fourier transform is supported on a set of $k$ frequencies with equal magnitude, in which case $FR(f)$ is of order $\sqrt{k}$ and phase perturbations already generate an exponentially large family of well-separated signals with the same Fourier ratio.

Our first main result shows that for $r \in \left[1,\sqrt{N}\right]$ and $\epsilon \in (0,1)$, the metric entropy of this class satisfies
$$
\log N_\epsilon\left(C_2(r)\right)
\le
C\,\frac{r^2}{\epsilon^2}
\left(
\log\left(\frac{eN}{r^2}\right)
+
\log\left(\frac{C}{\epsilon}\right)
\right),
$$
and, for sufficiently small fixed $\epsilon$,
$$
\log N_\epsilon\left(C_2(r)\right)
\ge
c\, r^2
\log\left(\frac{eN}{r^2}\right).
$$

These estimates show that $FR(f)^2$ plays the role of an effective dimension for the class. The upper bound demonstrates that the metric entropy is controlled by $r^2$ up to logarithmic factors, while the lower bound shows that any Fourier-ratio layer contains exponentially many well-separated signals once $r^2$ is large. In particular, the Fourier ratio controls not only the approximation of individual signals, but also the size of the entire class in a quantitative sense.

In addition, we obtain uniform sampling results. If one samples points independently from ${\mathbb Z}_N$, then empirical averages approximate population averages uniformly over $C_2(r)$, provided that the number of samples is sufficiently large in terms of the metric entropy. This connects the Fourier ratio framework to classical ideas from empirical process theory.

Finally, we identify a phase-orbit packing mechanism: if a single signal has a flat spectral block of size $k$, then phase perturbations of that signal generate an exponentially large family of signals with the same Fourier ratio and positive $\ell^2$ separation.

At every sufficiently small fixed covering scale, the upper and lower entropy bounds match in their dependence on $r$ and $N$, up to absolute constants. The remaining gap concerns the dependence on the covering scale $\epsilon$: the upper bound grows like $\epsilon^{-2}$, while the lower bound is presently established only at a fixed scale. Determining the sharp dependence of the metric entropy on $\epsilon$ remains an interesting open problem.

The results of this paper build directly on those in \cite{AldalehEtAl}, where the Fourier ratio is shown to control quantitative trigonometric approximation, stability under sampling and perturbation, and additive structure of large spectrum sets. In that work, the quantity $FR(f)^2$ emerges as a parameter governing approximation and structural properties of individual signals. Where Aldaleh et al. introduced the Fourier ratio and studied its role in approximation and stability of individual signals, the present paper focuses on the metric entropy and sampling complexity of the entire class of signals with a given Fourier ratio.

The present paper shows that the same parameter also governs the size and statistical complexity of the corresponding function classes. In particular, we demonstrate that $FR(f)^2$ controls metric entropy and sampling complexity, thereby extending the role of the Fourier ratio from approximation and structure to statistical learning and recovery from data.

We also point out that the present results fit into a broader program developed in earlier work \cite{BursteinIosevichNathan}, where the Fourier ratio was shown to control algorithmic and learning-theoretic notions of complexity. In particular, it was proved that a bound $FR(f)\le r$ yields explicit upper bounds on Kolmogorov rate–distortion complexity and on the statistical query dimension of the associated function class.

In a companion paper \cite{BursteinIosevichNathan}, the Fourier ratio was introduced as a basis-invariant measure of effective dimension governing signal recovery, localization obstructions, Kolmogorov rate-distortion complexity, and statistical query dimension. The present paper studies a complementary question: the metric entropy and sampling complexity of the Fourier ratio class itself. Together, these papers show that the Fourier ratio simultaneously controls approximation, recovery, structure, statistical complexity, and the geometric size of the associated function classes.

The metric entropy bounds obtained here are consistent with those results and can be viewed as their approximation-theoretic and geometric counterpart. While we do not obtain matching upper and lower bounds in the covering scale $\epsilon$, both estimates identify $FR(f)^2$ as the principal parameter governing the size of Fourier-ratio layers. Indeed, all three notions---metric entropy, algorithmic description length, and statistical query complexity---identify $r^2$ as the fundamental complexity parameter, up to logarithmic factors and the remaining gap in the covering-scale dependence of the metric entropy estimates. This provides further evidence that $FR(f)^2$ is the correct effective dimension governing approximation, recovery, and learning for these classes. A broader basis-invariant formulation of the Fourier-ratio philosophy, including connections with orthonormal systems, localization obstructions, and statistical query complexity, is developed in \cite{BursteinIosevichNathan}.

Although we formulate the results for signals on ${\mathbb Z}_N$, the arguments are fundamentally representation-theoretic and extend with minimal changes to arbitrary finite abelian groups and other finite orthonormal systems.

The distinction is that \cite{BursteinIosevichNathan} controls the complexity of individual objects or associated learning classes through description length and statistical query dimension, while the present paper studies the geometric size of an entire Fourier-ratio layer in $\ell^2$. Thus, the metric entropy estimate below should be viewed as a covering-number analogue of the algorithmic and learning-theoretic bounds obtained there.

A complementary perspective on the role of the Fourier ratio is developed in recent work \cite{IosevichGupta}, where it is shown that small Fourier ratio forces strong additive structure in the set of large values of a signal, in the sense that the indices at which $|f|$ is large can be generated by a small set using $\{-1,0,1\}$ combinations, with the size of the generating set controlled by $FR(f)^2$ up to logarithmic factors.

Taken together, these results support the interpretation of the Fourier ratio as a unifying complexity parameter in discrete harmonic analysis, linking nonlinear approximation, metric entropy, additive structure, and statistical learning. In addition to these class-level results, we show that a single signal whose Fourier transform contains a sufficiently large flat spectral block generates an exponentially large family of signals with identical Fourier ratio and positive $\ell^2$ separation.

\vskip.125in

We now summarize the main results of the paper.

\begin{figure}[ht]
\centering
\begin{tikzpicture}[scale=1.0]

\tikzstyle{every node}=[font=\small]

\draw[thick, rounded corners, fill=gray!15] (3.9,3.0) rectangle (8.1,3.95);
\node[align=center] at (6.0,3.475) {Fourier ratio\\ $FR(f)$};

\draw[thick, rounded corners, fill=gray!8] (0.0,1.55) rectangle (4.6,2.45);
\node[align=center] at (2.3,2.0) {small Fourier ratio};

\draw[thick, rounded corners, fill=gray!8] (0.0,0.15) rectangle (4.6,1.1);
\node[align=center] at (2.3,0.625) {sparse or compressible\\ Fourier structure};

\draw[thick, rounded corners, fill=gray!8] (0.0,-1.45) rectangle (4.6,-0.25);
\node[align=center] at (2.3,-0.85) {small covering numbers\\ and efficient sampling};

\draw[thick, rounded corners, fill=gray!8] (7.4,1.55) rectangle (12.0,2.45);
\node[align=center] at (9.7,2.0) {large Fourier ratio};

\draw[thick, rounded corners, fill=gray!8] (7.4,0.15) rectangle (12.0,1.1);
\node[align=center] at (9.7,0.625) {distributed or flat\\ Fourier structure};

\draw[thick, rounded corners, fill=gray!8] (7.4,-1.45) rectangle (12.0,-0.25);
\node[align=center] at (9.7,-0.85)
{large entropy\\ and phase-orbit packing};

\draw[thick, ->] (4.8,3.0) -- (2.8,2.45);
\draw[thick, ->] (7.2,3.0) -- (9.2,2.45);

\draw[thick, ->] (2.3,1.55) -- (2.3,1.1);
\draw[thick, ->] (2.3,0.15) -- (2.3,-0.25);

\draw[thick, ->] (9.7,1.55) -- (9.7,1.1);
\draw[thick, ->] (9.7,0.15) -- (9.7,-0.25);

\draw[thick, rounded corners, fill=gray!15] (2.1,-3.1) rectangle (9.9,-1.85);

\node[align=center] at (6.0,-2.475)
{$FR(f)^2$ acts as an effective dimension\\
governing entropy and sampling complexity};

\draw[thick, ->] (2.3,-1.45) -- (4.7,-1.85);
\draw[thick, ->] (9.7,-1.45) -- (7.3,-1.85);

\end{tikzpicture}

\caption{The Fourier ratio as an effective dimension parameter. Small Fourier ratio corresponds to compressible spectral structure and leads to small covering numbers and efficient sampling. Large Fourier ratio corresponds to distributed spectral structure and leads to lower-bound mechanisms such as global entropy growth and phase-orbit packing.}

\label{fig:fourier-ratio-roadmap}
\end{figure}

\vskip.125in 

The present paper is part of a broader program investigating Fourier ratio as a quantitative measure of effective spectral complexity across harmonic analysis, approximation theory, sampling, learning, and spectral geometry; see, for example, \cite{AldalehEtAl}, \cite{BursteinIosevichNathan}, \cite{IosevichGupta}, and the references contained therein. The emphasis of the present paper is on metric entropy, sampling complexity, and lower-bound mechanisms in the discrete setting.

The metric entropy bounds for the class $C_2(r)$ are obtained in Theorem \ref{main-entropy-theorem}. The theorem provides an upper bound controlled by $r^2$ up to logarithmic factors and a matching lower bound at fixed scale, showing that Fourier-ratio layers become exponentially large once $r^2$ is large.

The key ingredients are the truncation lemma (Lemma \ref{truncation}), which reduces the problem to sparse vectors, and matching upper and lower entropy bounds for $\Sigma_k$, including the greedy construction (Lemma \ref{greedy-construction}).

Uniform sampling results based on these entropy estimates are established in Theorem \ref{sampling-theorem}, which shows that empirical averages approximate population averages uniformly over $C_2(r)$ with a number of samples controlled by the metric entropy.

Finally, we establish a phase-orbit packing result for individual signals in Lemma \ref{phase-orbit-packing}, which shows that if the Fourier transform of a function contains a flat block of size $k$, then the function generates an exponentially large family of signals with identical Fourier ratio and positive $\ell^2$ separation.

\vskip.125in 
\section{Definitions}

Let $N \geq 1$ and let ${\mathbb Z}_N = {\mathbb Z}/N{\mathbb Z}$. For $f:{\mathbb Z}_N \to {\mathbb C}$ define
$$
\widehat{f}(m)
=
N^{-1/2}
\sum_{x \in {\mathbb Z}_N}
e^{-2\pi i xm/N} f(x).
$$

Then
$$
\left|\left|\widehat{f}\right|\right|_{\ell^2({\mathbb Z}_N)} = \left|\left|f\right|\right|_{\ell^2({\mathbb Z}_N)}.
$$

Define
$$
FR(f) = \frac{\left|\left|\widehat{f}\right|\right|_{\ell^1}}{\left|\left|\widehat{f}\right|\right|_{\ell^2}}.
$$

For $r \geq 1$ define
$$
C_2(r)
=
\left\lbrace
f:{\mathbb Z}_N \to {\mathbb C} :
\left|\left|f\right|\right|_{\ell^2}=1,
\quad
r \leq \left|\left|\widehat{f}\right|\right|_{\ell^1} \leq 2r
\right\rbrace.
$$

By unitarity this is equivalent to
$$
A(r)
=
\left\lbrace
a \in {\mathbb C}^N :
\left|\left|a\right|\right|_2 = 1,
\quad
r \leq \left|\left|a\right|\right|_1 \leq 2r
\right\rbrace.
$$

\section{Auxiliary results}

\begin{lemma}
If $a\in {\mathbb C}^N$ and $\left|\left|a\right|\right|_2 = 1$, then
$$
\left|\left|a\right|\right|_1^2 \leq |\operatorname{supp}(a)|.
$$
\end{lemma}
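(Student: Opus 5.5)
The plan is to apply the Cauchy--Schwarz inequality to the sum defining $\|a\|_1$, restricted to the support of $a$. Set $S=\operatorname{supp}(a)=\{j : a_j\neq 0\}$. Since $a_j=0$ off $S$, we have
$$
\|a\|_1=\sum_{j\in S}|a_j| = \sum_{j\in S} 1\cdot |a_j|.
$$
Cauchy--Schwarz applied to the vectors $(1)_{j\in S}$ and $(|a_j|)_{j\in S}$ then gives
$$
\|a\|_1 \le \Big(\sum_{j\in S}1\Big)^{1/2}\Big(\sum_{j\in S}|a_j|^2\Big)^{1/2} = |S|^{1/2}\,\|a\|_2 = |S|^{1/2}.
$$
Squaring both sides yields $\|a\|_1^2\le |\operatorname{supp}(a)|$.

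There is no real obstacle. The only point needing care is to restrict the sum to $S$ before applying Cauchy--Schwarz. Otherwise one only obtains the weaker bound $N$ in place of $|S|$. The normalization $\|a\|_2=1$ is used exactly once, in the final equality.

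It is worth recording that equality holds precisely when $|a_j|$ is constant on $S$. This corresponds to the flat-spectrum model case mentioned in the introduction, where $FR(f)$ is of order $\sqrt{k}$.
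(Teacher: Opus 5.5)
Your proposal is correct and follows the paper's proof essentially verbatim. Both apply Cauchy--Schwarz to the sum over $S=\operatorname{supp}(a)$ to get $\|a\|_1\le |S|^{1/2}\|a\|_2=|S|^{1/2}$ and then square; your added remark that equality holds exactly when $|a_j|$ is constant on $S$ is also accurate.
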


\begin{proof}
Let $S=\operatorname{supp}(a)$. Then
$$
\left|\left|a\right|\right|_1 = \sum_{j \in S} |a_j|
\le
|S|^{1/2} \left( \sum_{j \in S} |a_j|^2 \right)^{1/2}
=
|S|^{1/2} \left|\left|a\right|\right|_2.
$$
Since $\left|\left|a\right|\right|_2=1$, this gives
$$
\left|\left|a\right|\right|_1 \le |S|^{1/2},
$$
and squaring yields the result.
\end{proof}

\begin{lemma}\label{truncation}
Let $a \in {\mathbb C}^N$ satisfy $\left|\left|a\right|\right|_2 = 1$ and $\left|\left|a\right|\right|_1 \leq 2r$. For every $\epsilon \in (0,1)$ there exists a set $S$ with
$$
|S| \leq \left\lceil \frac{4r^2}{\epsilon^2}\right\rceil
$$
such that
$$
\left|\left|a-a_S\right|\right|_2 \leq \epsilon,
$$
where $a_S$ is the vector obtained by keeping the coordinates in $S$ and setting the rest to zero.
\end{lemma}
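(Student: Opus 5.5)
The plan is to take $S$ to be the set of the $k$ largest coordinates of $a$ in absolute value, with $k = \lceil 4r^2/\epsilon^2\rceil$. If $k \ge N$ we take $S$ to be all of ${\mathbb Z}_N$, and the claim is trivial. This is the standard best $k$-term approximation argument, using only $\|a\|_1 \le 2r$ and not the normalization beyond $\|a\|_2 = 1$.

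First, order the moduli as $|a|_{(1)} \ge |a|_{(2)} \ge \dots \ge |a|_{(N)}$ and let $S$ index the first $k$. The basic pointwise bound comes from monotonicity: for every $j$,
$$
j\,|a|_{(j)} \le \sum_{i\le j} |a|_{(i)} \le \|a\|_1 \le 2r.
$$
In particular $|a|_{(j)} \le 2r/k$ for all $j>k$.

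Second, estimate the tail by splitting one factor into a sup and the other into an $\ell^1$ sum:
$$
\|a-a_S\|_2^2=\sum_{j>k}|a|_{(j)}^2 \le \Big(\max_{j>k}|a|_{(j)}\Big)\sum_{j>k}|a|_{(j)} \le \frac{2r}{k}\,\|a-a_S\|_1 .
$$
We also need a bound on the tail $\ell^1$ mass. The crude bound $\|a-a_S\|_1\le 2r$ gives $\|a-a_S\|_2^2\le 4r^2/k\le \epsilon^2$, which is exactly what the constant $4r^2/\epsilon^2$ calls for. So the crude bound already suffices, and no sharpening via $\|a_S\|_1$ is needed.

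There is no real obstacle here. The only point needing care is the bookkeeping between the ceiling and the constant: we need $k \ge 4r^2/\epsilon^2$, which holds by definition of the ceiling. The case $N<k$, where the tail is empty, also has to be handled. The hypothesis $\|a\|_2=1$ is not even needed for this part.
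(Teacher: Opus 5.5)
Your proposal is correct and follows essentially the same argument as the paper: keep the $k=\min\{N,\lceil 4r^2/\epsilon^2\rceil\}$ largest coordinates. Then bound the tail by $\max_{j>k}|a|_{(j)}\cdot\sum_{j>k}|a|_{(j)}\le (\|a\|_1/k)\cdot\|a\|_1\le 4r^2/k\le\epsilon^2$. The paper bounds $a^*_{k+1}$ by the average of the top $k$ entries rather than via $j\,|a|_{(j)}\le\|a\|_1$, which is an immaterial difference, and it likewise never uses $\|a\|_2=1$.
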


\begin{proof}
Let $(a_j^*)_{j=1}^N$ be the non-increasing rearrangement of $(|a_j|)$, so that
$$
a_1^* \geq a_2^* \geq \cdots \geq a_N^* \geq 0.
$$
Set
$$
k
=
\min\left\lbrace
N,
\left\lceil \frac{4r^2}{\epsilon^2}\right\rceil
\right\rbrace,
$$
and let $S$ be the set of indices corresponding to the $k$ largest coordinates of $a$. If $k=N$, then $a_S=a$, so the conclusion is immediate.

Suppose now that $k<N$. Then
$$
\left|\left|a-a_S\right|\right|_2^2
=
\sum_{j>k}(a_j^*)^2.
$$
Since the sequence is decreasing, for $j>k$ we have $a_j^*\leq a_{k+1}^*$, and therefore
$$
\sum_{j>k}(a_j^*)^2
\leq
a_{k+1}^*\sum_{j>k}a_j^*.
$$
Moreover,
$$
a_{k+1}^*
\leq
\frac{1}{k}\sum_{i=1}^k a_i^*
\leq
\frac{\left|\left|a\right|\right|_1}{k},
$$
and
$$
\sum_{j>k}a_j^*\leq \left|\left|a\right|\right|_1.
$$
Hence
$$
\left|\left|a-a_S\right|\right|_2^2
\leq
\frac{\left|\left|a\right|\right|_1^2}{k}
\leq
\frac{4r^2}{k}
\leq
\epsilon^2.
$$
Thus $\left|\left|a-a_S\right|\right|_2\leq\epsilon$. Finally, by the definition of $k$,
$$
|S|=k\leq \left\lceil \frac{4r^2}{\epsilon^2}\right\rceil.
$$
This completes the proof.
\end{proof}

\begin{figure}[ht]
\centering
\begin{tikzpicture}[scale=1.0]

\draw[->] (0,0) -- (11.2,0) node[right] {$j$};
\draw[->] (0,0) -- (0,3.2) node[above] {$a_j^*$};

\foreach \x/\h in {
0.5/2.8,
1.0/2.45,
1.5/2.15,
2.0/1.9,
2.5/1.65,
3.0/1.42,
3.5/1.22,
4.0/1.04,
4.5/0.88,
5.0/0.74,
5.5/0.62,
6.0/0.52,
6.5/0.44,
7.0/0.36,
7.5/0.30,
8.0/0.25,
8.5/0.21,
9.0/0.17,
9.5/0.14,
10.0/0.11}
{
\draw[fill=gray!35] (\x,0) rectangle +(0.32,\h);
}

\foreach \x/\h in {
0.5/2.8,
1.0/2.45,
1.5/2.15,
2.0/1.9,
2.5/1.65}
{
\draw[fill=gray!80] (\x,0) rectangle +(0.32,\h);
}

\draw[dashed] (3.0,0) -- (3.0,3.0);
\node[below] at (1.75,-0.15) {largest $k$ coordinates};
\node[below] at (6.8,-0.15) {discarded tail};

\draw[decorate, decoration={brace, amplitude=6pt}] (0.5,3.0) -- (2.82,3.0);
\node[above] at (1.65,3.15) {$a_S$};

\draw[decorate, decoration={brace, amplitude=6pt}] (3.05,1.0) -- (10.35,1.0);
\node[above] at (6.7,1.15) {$\left|\left|a-a_S\right|\right|_2 \le \epsilon$};

\node at (6.0,2.55) {$|S|=k \sim r^2/\epsilon^2$};

\end{tikzpicture}
\caption{Truncation to the largest $k$ Fourier coefficients. A vector with $\ell^2$ norm $1$ and $\ell^1$ norm at most $2r$ is approximated by retaining its largest $k \sim r^2/\epsilon^2$ coordinates. The discarded tail has $\ell^2$ norm at most $\epsilon$.}
\end{figure}

\begin{lemma}
Let
$$
\Sigma_k = \left\lbrace a \in {\mathbb C}^N : \left|\left|a\right|\right|_2 = 1, \ |\operatorname{supp}(a)| \leq k\right\rbrace.
$$
Then
$$
\log N_\epsilon(\Sigma_k)
\leq
k \log\left(\frac{eN}{k}\right)
+
2k \log\left(\frac{C}{\epsilon}\right).
$$
\end{lemma}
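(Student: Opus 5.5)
The plan is the standard union-over-supports argument. First, decompose $\Sigma_k$ as a union over supports,
$$
\Sigma_k=\bigcup_{|S|=k} \Sigma_S,
\qquad
\Sigma_S=\left\lbrace a: \left|\left|a\right|\right|_2=1,\ \operatorname{supp}(a)\subseteq S\right\rbrace .
$$
Every vector with support of size at most $k$ has its support inside some $k$-element set $S$, assuming $k\le N$; if $k>N$ we simply replace $k$ by $N$. The number of such sets is
$$
\binom{N}{k}\le \left(\frac{eN}{k}\right)^k,
$$
which produces the term $k\log(eN/k)$.

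Next, each $\Sigma_S$ is isometric to the unit sphere of ${\mathbb C}^k\cong{\mathbb R}^{2k}$. By the usual volumetric argument, a maximal $\epsilon$-separated subset of this sphere is an $\epsilon$-net with centers in the set. The balls of radius $\epsilon/2$ around its points are disjoint and lie in the ball of radius $1+\epsilon/2$ in ${\mathbb R}^{2k}$. This gives at most
$$
\left(1+\frac{2}{\epsilon}\right)^{2k}\le \left(\frac{3}{\epsilon}\right)^{2k}
$$
points for $\epsilon\in(0,1)$, and hence the term $2k\log(C/\epsilon)$ with $C=3$.

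Taking the union of these nets over all $S$ gives an $\epsilon$-net of $\Sigma_k$ with centers in $\Sigma_k$, since each center lies in some $\Sigma_S\subseteq\Sigma_k$. Its cardinality is at most the product of the two bounds, and taking logarithms gives the stated estimate.

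The only point requiring care is the convention that net centers belong to the class. This is handled by choosing maximal separated sets inside each $\Sigma_S$ rather than taking a net of the ambient ball and projecting. (Alternatively, one could cover at scale $\epsilon/2$ with arbitrary centers and then move each center into the set, at the cost of a factor $2$ inside $C$.) Real versus complex dimension is accounted for by the factor $2k$. I do not expect a genuine obstacle beyond these bookkeeping points.
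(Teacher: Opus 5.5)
Your proposal is correct and follows essentially the same route as the paper: you decompose $\Sigma_k$ over $k$-element supports and bound their number by $\binom{N}{k}\le (eN/k)^k$. You then cover each $\Sigma_S$, a copy of the unit sphere in $\mathbb{R}^{2k}$, by a volumetric net with centers in the set; the paper cites Vershynin's Corollary 4.2.13 for this step, while you obtain it explicitly with $C=3$ via a maximal separated set. One minor caveat is that your aside about replacing $k$ by $N$ when $k>N$ does not literally give the displayed bound for $k\gg N$, because $k\log(eN/k)$ becomes negative there; this is harmless, since the lemma is implicitly stated for $k\le N$ and the paper only uses it in that range.
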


\begin{proof}
We give a brief self-contained covering argument; see also \cite[Section 4.2]{Vershynin}.
For each subset $S \subset \{1,\cdots,N\}$ with $|S| = k$, define
\[
\Sigma_k(S)
=
\left\lbrace
a \in \mathbb{C}^N : \left|\left|a\right|\right|_2 = 1,\ \operatorname{supp}(a) \subset S
\right\rbrace
\]
Then
\[
\Sigma_k \subset \bigcup_{|S|=k} \Sigma_k(S).
\]
For each fixed $S$, the set $\Sigma_k(S)$ is isometric to the unit sphere in a $2k$-dimensional Euclidean space. By standard volumetric estimates (see \cite[Corollary 4.2.13]{Vershynin}), there exists an $\varepsilon$-net $\mathcal N_S$ satisfying
\[
|\mathcal N_S| \le \left( \frac{C}{\varepsilon} \right)^{2k}.
\]
Define
\[
\mathcal N := \bigcup_{|S|=k} \mathcal N_S.
\]
The number of subsets $S$ satisfies
\[
\binom{N}{k} \le \left( \frac{eN}{k} \right)^k.
\]
Therefore,
\[
|\mathcal N|
\le
\binom{N}{k} \left( \frac{C}{\varepsilon} \right)^{2k}
\le \left( \frac{eN}{k} \right)^k \left( \frac{C}{\varepsilon} \right)^{2k}.
\]
Taking logarithms gives
\[
\log N_\varepsilon(\Sigma_k)
\le
k \log\!\left( \frac{eN}{k} \right)
+
2k \log\!\left( \frac{C}{\varepsilon} \right).
\]
\end{proof}

\begin{lemma}\label{greedy-construction}
There exists an absolute constant $c>0$ such that, for every $1\leq k\leq N$, there is a set ${\mathcal P}_k\subset\Sigma_k$ with the following properties. Every vector in ${\mathcal P}_k$ has exactly $k$ nonzero coordinates, all of magnitude $k^{-1/2}$; distinct vectors in ${\mathcal P}_k$ have distance at least $1$ in $\ell^2$; and
$$
\log |{\mathcal P}_k|
\geq
c k\log\left(\frac{eN}{k}\right).
$$
Consequently, for every $0<\epsilon<1/2$,
$$
\log N_\epsilon(\Sigma_k)
\geq
c k\log\left(\frac{eN}{k}\right).
$$
\end{lemma}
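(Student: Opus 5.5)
The plan is a Gilbert--Varshamov type greedy argument on supports, using real vectors of the form $k^{-1/2}\mathbf{1}_S$. For such vectors,
$$
\left|\left|k^{-1/2}\mathbf 1_S - k^{-1/2}\mathbf 1_T\right|\right|_2^2 = \frac{|S\triangle T|}{k} = \frac{2(k-|S\cap T|)}{k}.
$$
Distance at least $1$ is therefore equivalent to $|S\cap T|\le k/2$. So it suffices to find a family $\mathcal F$ of $k$-subsets of $\{1,\dots,N\}$ with pairwise intersections at most $k/2$ and $\log|\mathcal F|\ge ck\log(eN/k)$.

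\textbf{The greedy step.} Pick sets one at a time, each time discarding all $k$-sets that meet the chosen set in more than $k/2$ elements. The number of $k$-sets $T$ with $|S\cap T|>k/2$ is at most
$$
B=\sum_{j>k/2}\binom{k}{j}\binom{N-k}{k-j}\le 2^k\binom{N}{\lceil k/2\rceil},
$$
where we use $\binom{N-k}{k-j}\le\binom{N}{k-j}$ with $k-j<k/2$. The greedy procedure then yields $|\mathcal F|\ge \binom{N}{k}/B$.

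\textbf{Main obstacle: bounding $\binom{N}{k}/B$ from below.} In the regime $k\le N/C_0$ for a large absolute constant $C_0$, use $\binom Nk\ge (N/k)^k$ and $\binom{N}{\lceil k/2\rceil}\le (2eN/k)^{k/2+1}$. This gives
$$
\log|\mathcal F|\ge \tfrac{k}{2}\log\frac{N}{k}-O(k)-\log\frac{2eN}{k},
$$
which is at least $ck\log(eN/k)$ once $N/k$ exceeds a large constant. The additive $\log(N/k)$ term needs attention: when $k=1$ it is harmless directly, since any two distinct coordinate vectors are at distance $\sqrt2$ and we get $N$ of them. For $k\ge2$ it is absorbed, possibly after adjusting constants.

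\textbf{The regime $N/k\le C_0$ and small $k$.} Here $\log(eN/k)$ is bounded, so we only need $\log|\mathcal F|\ge ck$. For this, restrict to a cleaner construction. Split $k$ coordinates into pairs, or use a binary code: fix a support $S$ of size $k$ and use vectors $k^{-1/2}(\pm1)_{j\in S}$ with sign patterns drawn from a Gilbert--Varshamov code of Hamming distance at least $k/4$. The squared distance is then $4d_H/k\ge1$, and the code has size at least $e^{ck}$. This handles every $k$ with a uniform $c$. Note that for small $k$ the claimed bound $ck\log(eN/k)$ with $c$ small is implied by the cases above, since $\log(eN/k)\le\log(eN)$ and for $k=1$ we have $N$ vectors. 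The case $k=1$ also relies on $\log N\ge c\log(eN)$, which requires $N\ge2$, or else trivial adjustment of $c$. Finally, take $c$ to be the minimum of the constants from the two regimes.

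\textbf{Consequence for covering numbers.} Suppose $0<\epsilon<1/2$ and $\mathcal N$ is an $\epsilon$-net of $\Sigma_k$. Each point of $\mathcal P_k$ lies within $\epsilon$ of some net point, and no net point can serve two points of $\mathcal P_k$, because they would then be at distance at most $2\epsilon<1$. Hence $N_\epsilon(\Sigma_k)\ge|\mathcal P_k|$, which gives the stated lower bound.
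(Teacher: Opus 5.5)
Your route is essentially the paper's: greedy $k$-sets with pairwise intersections at most $k/2$ when $N/k$ is large, a Gilbert--Varshamov sign code on a fixed support when $N/k$ is bounded, and separate treatment of the smallest $k$ (the paper uses $\lfloor N/k\rfloor$ disjoint supports for all $k\le 3$). One step fails as written: the claim that the additive term is absorbed for every $k\ge 2$. You bound $\binom{N}{\lceil k/2\rceil}\le (2eN/k)^{k/2+1}$, so the coefficient of $\log(N/k)$ in your final estimate is $\frac k2-1$. This vanishes at $k=2$, where your bound reads
\[
\log|\mathcal F|\ \ge\ \log\frac N2-\log(eN)-O(1)\ =\ -O(1).
\]
That is vacuous, while the lemma requires $\log|\mathcal P_2|\ge 2c\log(eN/2)\to\infty$ as $N\to\infty$. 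Your other devices do not cover this case. The sign code has size $e^{O(1)}$ for $k=2$, independent of $N$. So the sentence asserting that small $k$ is ``implied by the cases above'' is not justified for $k=2$.

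The repair is easy. Using $\lceil k/2\rceil\le (k+1)/2$ instead of $k/2+1$ already gives $\log|\mathcal F|\ge\frac{k-1}{2}\log\frac Nk-O(k)$, which is fine for all $k\ge 2$. Better, use the strict inequality $k-j<k/2$ you noted. Every $\binom{N}{i}$ with $i<k/2$ is at most $(2eN/k)^{k/2}$, by $\binom Ni\le (eN/i)^i$ and monotonicity of $t\mapsto (eN/t)^t$ on $(0,N]$. Hence $B\le 2^k(2eN/k)^{k/2}$ and $\log|\mathcal F|\ge\frac k2\log\frac Nk-O(k)$ with no additive logarithm at all; this is essentially the paper's count $k2^k(2eN/k)^{k/2}$. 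Alternatively, note that distinct $2$-sets meet in at most $1=k/2$ point, so all $\binom N2$ normalized indicators are $1$-separated. With this fix, the rest of your argument, including the covering-number consequence, is correct.
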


\begin{proof}
Fix a sufficiently large absolute constant $A>1$. We first consider the case $k\leq N/A$.

If $k\leq3$, choose a family of pairwise disjoint $k$-element subsets of $\{1,\dots,N\}$. The normalized indicator vectors of these sets are pairwise separated by distance $\sqrt{2}$, and there are at least $\lfloor N/k\rfloor$ of them. After decreasing the absolute constant $c$, this gives the required estimate.

Suppose now that $4\leq k\leq N/A$. We construct a family ${\mathcal F}$ of $k$-element subsets of $\{1,\dots,N\}$ such that
$$
|S\cap T|\leq \frac{k}{2}
$$
whenever $S,T\in{\mathcal F}$ are distinct. Start with all $k$-element subsets and choose them greedily. After selecting a set $S$, discard every set $T$ satisfying $|S\cap T|>k/2$. If $j=k-|S\cap T|$, then the number of discarded sets is at most
$$
1+
\sum_{1\leq j<k/2}
\binom{k}{j}\binom{N-k}{j}.
$$
Using $\binom{k}{j}\leq2^k$ and
$$
\binom{N-k}{j}
\leq
\binom{N}{j}
\leq
\left(\frac{eN}{j}\right)^j,
$$
we obtain
$$
1+
\sum_{1\leq j<k/2}
\binom{k}{j}\binom{N-k}{j}
\leq
k2^k\left(\frac{2eN}{k}\right)^{k/2}.
$$
Since
$$
\binom{N}{k}
\geq
\left(\frac{N}{k}\right)^k,
$$
the greedy construction produces a family satisfying
$$
|{\mathcal F}|
\geq
\frac{\binom{N}{k}}
{k2^k\left(\frac{2eN}{k}\right)^{k/2}}
\geq
\exp\left(
\frac{k}{2}\log\left(\frac{N}{k}\right)-Ck
\right)
$$
for an absolute constant $C>0$. Choosing $A$ sufficiently large gives
$$
|{\mathcal F}|
\geq
\exp\left(
c k\log\left(\frac{eN}{k}\right)
\right).
$$
For each $S\in{\mathcal F}$, define $a^{(S)}\in{\mathbb C}^N$ by
$$
a^{(S)}_j
=
\begin{cases}
k^{-1/2}, & j\in S,\\
0, & j\notin S.
\end{cases}
$$
Then $a^{(S)}\in\Sigma_k$, and for distinct $S,T\in{\mathcal F}$,
$$
\left|\left|a^{(S)}-a^{(T)}\right|\right|_2^2
=
2-2\frac{|S\cap T|}{k}
\geq
1.
$$

It remains to consider $N/A<k\leq N$. Fix a set $S\subset\{1,\dots,N\}$ with $|S|=k$. A standard greedy packing of the Hamming cube produces a family ${\mathcal G}\subset\{-1,1\}^S$ such that
$$
|{\mathcal G}|\geq\exp(c_0k)
$$
and any two distinct elements of ${\mathcal G}$ differ in at least $k/4$ coordinates. For $\sigma\in{\mathcal G}$, define
$$
a^{(\sigma)}_j
=
\begin{cases}
\sigma(j)k^{-1/2}, & j\in S,\\
0, & j\notin S.
\end{cases}
$$
Then $a^{(\sigma)}\in\Sigma_k$, and for distinct $\sigma,\tau\in{\mathcal G}$,
$$
\left|\left|a^{(\sigma)}-a^{(\tau)}\right|\right|_2^2
\geq
\frac{k}{4}\cdot\frac{4}{k}
=
1.
$$
Since $k>N/A$, the quantity $\log(eN/k)$ is bounded by the absolute constant $\log(eA)$. Thus, after decreasing $c$ if necessary,
$$
|{\mathcal G}|
\geq
\exp\left(
c k\log\left(\frac{eN}{k}\right)
\right).
$$
The asserted covering-number estimate follows because an $\epsilon$-ball with $0<\epsilon<1/2$ contains at most one point of a $1$-separated set.
\end{proof}

\vskip.125in
We now complement the global entropy and sampling results above with a phase-orbit construction that produces large, well-separated families of signals starting from a single function. The results above show that the class $C_2(r)$ is large in a metric and statistical sense. However, these arguments do not directly address whether a given function $f \in C_2(r)$ can itself generate a geometrically rich family of signals with the same Fourier ratio.

The key mechanism is the presence of a sufficiently large collection of Fourier coefficients of comparable magnitude. In this situation, one can modify the phases of the Fourier transform on this set without changing either the $\ell^2$ norm or the $\ell^1$ norm. This produces a family of signals with the same Fourier ratio as $f$, while maintaining quantitative separation in $\ell^2$.

The lemma below formalizes this idea by showing that a flat spectral block gives rise to an exponentially large family of signals obtained by phase perturbations, all of which remain in the same Fourier ratio layer and are separated by a fixed positive distance. This provides a phase-orbit packing generated by a single signal.

\begin{lemma}[Phase-orbit packing]\label{phase-orbit-packing}
Let $f:{\mathbb Z}_N\to{\mathbb C}$ satisfy $\left|\left|f\right|\right|_{\ell^2}=1$. Suppose that there exists a set
$S\subset{\mathbb Z}_N$ with $|S|=k$ and a constant $a>0$ such that
$$
\left|\widehat f(m)\right|\ge a k^{-1/2}
\quad \text{for all } m\in S.
$$
For each choice of signs $\sigma:S\to\{-1,1\}$ define $f_\sigma$ by
$$
\widehat f_\sigma(m)=
\begin{cases}
\sigma(m)\widehat f(m) & m\in S,\\
\widehat f(m) & m\notin S.
\end{cases}
$$
Then $\left|\left|f_\sigma\right|\right|_{\ell^2}=1$ and
$$
\left|\left|\widehat f_\sigma\right|\right|_{\ell^1}=\left|\left|\widehat f\right|\right|_{\ell^1}.
$$
Moreover, there exists a subfamily $\Sigma\subset\{-1,1\}^S$ satisfying
$$
|\Sigma|\ge \exp(c k)
$$
such that for distinct $\sigma,\tau\in\Sigma$,
$$
\left|\left|f_\sigma-f_\tau\right|\right|_{\ell^2}\ge a,
$$
where $c>0$ is an absolute constant.
\end{lemma}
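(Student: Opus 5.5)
The plan has three steps: check that the sign flips preserve both norms, reduce the $\ell^2$ distance to a Hamming distance via Plancherel, and then pick a well-separated subfamily of sign patterns.

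\textbf{Norm invariance.} Multiplying $\widehat f(m)$ by $\sigma(m)\in\{-1,1\}$ leaves $|\widehat f(m)|$ unchanged for every $m$. Hence $|\widehat f_\sigma| = |\widehat f|$ pointwise. This gives $\|\widehat f_\sigma\|_{\ell^1}=\|\widehat f\|_{\ell^1}$ directly. By unitarity of the Fourier transform (stated in the Definitions section) it also gives $\|f_\sigma\|_{\ell^2}=\|\widehat f_\sigma\|_{\ell^2}=\|\widehat f\|_{\ell^2}=1$. In particular every $f_\sigma$ has the same Fourier ratio as $f$.

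\textbf{Distance formula.} By Plancherel,
$$
\left|\left|f_\sigma-f_\tau\right|\right|_{\ell^2}^2=\sum_{m\in S}|\sigma(m)-\tau(m)|^2\,|\widehat f(m)|^2 .
$$
The coordinates outside $S$ cancel. Each $m$ with $\sigma(m)\neq\tau(m)$ contributes $4|\widehat f(m)|^2\ge 4a^2/k$, and the other terms vanish. Writing $d_H(\sigma,\tau)$ for the Hamming distance on $\{-1,1\}^S$, we get
$$
\left|\left|f_\sigma-f_\tau\right|\right|_{\ell^2}^2\ge \frac{4a^2}{k}\,d_H(\sigma,\tau).
$$

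\textbf{Choice of the subfamily.} It therefore suffices to find $\Sigma\subset\{-1,1\}^S$ with $|\Sigma|\ge e^{ck}$ and pairwise Hamming distance at least $k/4$. Then $\|f_\sigma-f_\tau\|_{\ell^2}^2\ge a^2$, as required. This is the same Hamming-cube packing (Gilbert--Varshamov) already used in the proof of Lemma \ref{greedy-construction}, and I would invoke it directly or reprove it greedily. Pick codewords one at a time and discard the Hamming ball of radius less than $k/4$ around each. Each ball has at most $\sum_{j<k/4}\binom{k}{j}\le 2^{H(1/4)k}$ points, where $H$ is the binary entropy function and $H(1/4)<1$. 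The greedy procedure therefore produces at least $2^{(1-H(1/4))k}=e^{c_0k}$ codewords.

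\textbf{Main obstacle.} The only point needing care is small $k$, where the packing bound is weak. For small $k$ one can simply take $\Sigma=\{\sigma,-\sigma\}$ with $d_H=k\ge k/4$, and reduce $c$ so that $e^{ck}\le 2$ in that range. For $k=1$ the two signs still give distance $\ge 2a\ge a$. The constant $c$ is absolute, since it comes only from the entropy estimate and does not depend on $a$, $N$ or $f$.
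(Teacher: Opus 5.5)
Your proposal is correct and follows essentially the same route as the paper: sign flips preserve $|\widehat f|$ pointwise, Plancherel reduces the separation to a Hamming distance, and a greedy Hamming-cube packing at distance $k/4$, with an antipodal pair for small $k$, gives $|\Sigma|\ge e^{ck}$ and $\|f_\sigma-f_\tau\|_{\ell^2}\ge a$. Your bound $\sum_{j<k/4}\binom{k}{j}\le 2^{H(1/4)k}$ actually holds for every $k\ge 1$, so in your version the separate small-$k$ case is not needed.
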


\begin{proof}
Since the Fourier transform is unitary, it is enough to estimate the distance between the Fourier transforms. For every $\sigma$,
$$
\left|\left|\widehat f_\sigma\right|\right|_{\ell^2}=\left|\left|\widehat f\right|\right|_{\ell^2}=1,
$$
because changing signs does not change the absolute values of the Fourier coefficients. Similarly,
$$
\left|\left|\widehat f_\sigma\right|\right|_{\ell^1}=\left|\left|\widehat f\right|\right|_{\ell^1}.
$$

We now construct a large separated family of signs. Identify $\{-1,1\}^S$ with the vertices of the Hamming cube of dimension $k$. For all sufficiently large $k$, choose signs greedily as follows. Select one vertex, and discard all vertices at Hamming distance less than $k/4$ from it. Repeat this procedure until no vertices remain.

The number of vertices discarded at each step is at most
$$
\sum_{j<k/4} \binom{k}{j}.
$$
For $k$ sufficiently large this quantity is at most $\exp(c_1 k)$ with $c_1<\log 2$. Since the whole cube has cardinality $2^k$, the greedy procedure produces a set $\Sigma\subset\{-1,1\}^S$ satisfying
$$
|\Sigma|\ge \exp(c k)
$$
such that any two distinct $\sigma,\tau\in\Sigma$ differ on at least $k/4$ elements of $S$.

For the finitely many remaining values of $k$, take the two constant sign patterns $\sigma\equiv 1$ and $\tau\equiv -1$. After decreasing the absolute constant $c>0$, the estimate $|\Sigma|\geq\exp(ck)$ holds for every $k\geq1$.

For distinct $\sigma$ and $\tau$,
$$
\left|\left|f_\sigma-f_\tau\right|\right|_{\ell^2}^2
=
\left|\left|\widehat f_\sigma-\widehat f_\tau\right|\right|_{\ell^2}^2.
$$
The two Fourier transforms differ only on $S$. On every point $m\in S$ where $\sigma(m)\ne\tau(m)$, one has
$$
|\widehat f_\sigma(m)-\widehat f_\tau(m)|
=
2|\widehat f(m)|.
$$
In the large-$k$ case, the sign patterns differ on at least $k/4$ elements of $S$, and therefore
$$
\left|\left|f_\sigma-f_\tau\right|\right|_{\ell^2}^2
\ge
\frac{k}{4}\cdot 4a^2 k^{-1}
=
a^2.
$$
In the remaining finite cases, the two constant sign patterns differ on every point of $S$, and hence
$$
\left|\left|f_\sigma-f_\tau\right|\right|_{\ell^2}^2
\geq
k\cdot 4a^2 k^{-1}
=
4a^2.
$$
Thus in all cases
$$
\left|\left|f_\sigma-f_\tau\right|\right|_{\ell^2}\ge a.
$$
This completes the proof.
\end{proof}

\vskip.125in

\section{Main entropy theorem}

\begin{theorem}\label{main-entropy-theorem}
Let $1 \leq r \leq \sqrt{N}$ and $0<\epsilon<1$. Then
$$
\log N_\epsilon(C_2(r))
\leq
C\frac{r^2}{\epsilon^2}
\left(
\log\left(\frac{eN}{r^2}\right)
+
\log\left(\frac{C}{\epsilon}\right)
\right),
$$
where $C>0$ is an absolute constant.

Moreover, there exists an absolute constant $\epsilon_0>0$ such that, for every $0<\epsilon<\epsilon_0$,
$$
\log N_\epsilon(C_2(r))
\geq
c r^2\log\left(\frac{eN}{r^2}\right),
$$
where $c>0$ is an absolute constant.
\end{theorem}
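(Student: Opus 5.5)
For the upper bound I will reduce to the sparse classes $\Sigma_k$ by the truncation lemma (Lemma \ref{truncation}), then cover $\Sigma_k$ using the covering lemma for $\Sigma_k$. For the lower bound I will embed the packing $\mathcal P_k$ of Lemma \ref{greedy-construction}, with $k\approx r^2$, into $A(r)$. Throughout I work in $A(r)$, which is isometric to $C_2(r)$ by unitarity of the Fourier transform.

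\emph{Upper bound.} Set $\eta=\epsilon/8$ and $k=\lceil 4r^2/\eta^2\rceil$.

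If $k\ge N$, I cover the whole unit sphere of $\mathbb C^N$ volumetrically with $(C/\eta)^{2N}$ points. Since $N\le k\lesssim r^2/\epsilon^2$, this is already within the claimed bound.

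If $k<N$, take $a\in A(r)$. Lemma \ref{truncation}, applied with $\eta$ in place of $\epsilon$, gives a set $S$ with $|S|\le k$ and $\|a-a_S\|_2\le\eta$. Then $\|a_S\|_2\ge 1-\eta$, so the normalized vector $b=a_S/\|a_S\|_2$ lies in $\Sigma_k$ and satisfies
$$\|a-b\|_2\le \|a-a_S\|_2+\bigl(1-\|a_S\|_2\bigr)\le 2\eta.$$
Let $\mathcal N$ be an $\eta$-net of $\Sigma_k$ from the covering lemma. Every $a\in A(r)$ is then within $3\eta$ of some point of $\mathcal N$.

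The definition of $N_\epsilon$ requires centers in the class, so I fix this as follows. For each point $p\in\mathcal N$ whose $3\eta$-ball meets $A(r)$, choose one $a_p\in A(r)$ in that ball. The points $a_p$ form a $6\eta<\epsilon$ net of $A(r)$ with centers in the class, and there are at most $|\mathcal N|$ of them. Hence
$$\log N_\epsilon\le k\log(eN/k)+2k\log(C/\eta).$$
Finally, $k\ge r^2$ gives $\log(eN/k)\le\log(eN/r^2)$, and $k\le C r^2/\epsilon^2$ gives the stated form after enlarging $C$.

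\emph{Lower bound.} Take $k=\lceil r^2\rceil$. Since $1\le r\le\sqrt N$, we have $k\le N$ and $r^2\le k\le 2r^2$. Each vector of $\mathcal P_k$ has $k$ entries of modulus $k^{-1/2}$, so its $\ell^2$ norm is $1$ and its $\ell^1$ norm is $\sqrt k\in[r,\sqrt2\,r]\subset[r,2r]$. Thus $\mathcal P_k\subset A(r)$.

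Since $\mathcal P_k$ is $1$-separated, for $\epsilon<\epsilon_0:=1/2$ each $\epsilon$-ball contains at most one of its points. Therefore
$$N_\epsilon(A(r))\ge|\mathcal P_k|\ge\exp\bigl(ck\log(eN/k)\bigr).$$
It remains to compare $k\log(eN/k)$ with $r^2\log(eN/r^2)$. Because $k\ge r^2$ and $k\le 2r^2$, we have $\log(eN/k)\ge\log(eN/r^2)-\log 2$. The right-hand side is at least $\tfrac12\log(eN/r^2)$ whenever $N/r^2\ge 2/e$. In the residual range $N/r^2<2/e$, the quantity $\log(eN/r^2)$ is bounded by an absolute constant, while $\log(eN/k)\ge\log(eN/N)=1$. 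So in all cases the comparison holds after shrinking $c$.

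The main obstacle is not analytic but bookkeeping. On the upper-bound side, one must handle the normalization after truncation and the requirement that the net centers lie inside $C_2(r)$; both are resolved by the factor-$8$ slack in $\eta$. On the lower-bound side, one must check that $\sqrt{\lceil r^2\rceil}$ stays in $[r,2r]$ and that $\lceil r^2\rceil\le N$, which is exactly where the hypothesis $1\le r\le\sqrt N$ is used.
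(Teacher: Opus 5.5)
Your proposal is correct and follows the paper's proof essentially step for step. For the upper bound you truncate at scale $\epsilon/8$, normalize into $\Sigma_k$, and recenter the net so its centers lie in $A(r)$; you should invoke the same recentering explicitly in the $k\ge N$ case. For the lower bound you embed $\mathcal P_k$ with $k=\lceil r^2\rceil$.

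One small slip. The inequality $\log(eN/r^2)-\log 2\ge\tfrac12\log(eN/r^2)$ requires $N/r^2\ge 4/e$, not $2/e$. As you wrote it, the residual range is empty because $N/r^2\ge 1$, so the claim is false for $1\le N/r^2<4/e$ (for example $N=r^2$). Your residual-range argument covers that range unchanged once the threshold is corrected. The paper avoids the case split by noting that $t\mapsto t\log(eN/t)$ is nondecreasing on $[1,N]$.
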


\begin{proof}
By unitarity of the Fourier transform, it suffices to prove the corresponding statement for
$$
A(r)
=
\left\lbrace
a\in{\mathbb C}^N:
\left|\left|a\right|\right|_2=1,
\quad
r\leq \left|\left|a\right|\right|_1\leq 2r
\right\rbrace.
$$

We first prove the upper bound. Apply Lemma \ref{truncation} with $\epsilon/8$ in place of $\epsilon$. Then every $a\in A(r)$ admits a truncation $a_S$ supported on at most
$$
k
=
\min\left\lbrace
N,
\left\lceil\frac{256r^2}{\epsilon^2}\right\rceil
\right\rbrace
$$
coordinates such that
$$
\left|\left|a-a_S\right|\right|_2\leq \frac{\epsilon}{8}.
$$
Set
$$
b=\frac{a_S}{\left|\left|a_S\right|\right|_2}.
$$
Then $b\in\Sigma_k$. Since $a_S$ and $a-a_S$ have disjoint supports,
$$
1-\left|\left|a_S\right|\right|_2^2
=
\left|\left|a-a_S\right|\right|_2^2.
$$
Consequently,
$$
1-\left|\left|a_S\right|\right|_2
\leq
\left|\left|a-a_S\right|\right|_2^2
\leq
\frac{\epsilon^2}{64}.
$$
Therefore,
$$
\left|\left|a-b\right|\right|_2
\leq
\left|\left|a-a_S\right|\right|_2+\left|\left|a_S-b\right|\right|_2
\leq
\frac{\epsilon}{8}+\frac{\epsilon^2}{64}
\leq
\frac{\epsilon}{4}.
$$

Let ${\mathcal N}$ be an $\epsilon/4$-net for $\Sigma_k$ whose centers belong to $\Sigma_k$ and whose cardinality is $N_{\epsilon/4}(\Sigma_k)$. For every $z\in{\mathcal N}$ such that
$$
B\left(z,\frac{\epsilon}{2}\right)\cap A(r)\neq\varnothing,
$$
choose one point $a_z\in A(r)$ satisfying
$$
\left|\left|a_z-z\right|\right|_2\leq\frac{\epsilon}{2}.
$$
We claim that the collection of all such points $a_z$ is an $\epsilon$-net for $A(r)$. Indeed, given $a\in A(r)$, choose $b\in\Sigma_k$ as above and then choose $z\in{\mathcal N}$ with
$$
\left|\left|b-z\right|\right|_2\leq\frac{\epsilon}{4}.
$$
Then
$$
\left|\left|a-z\right|\right|_2
\leq
\left|\left|a-b\right|\right|_2+\left|\left|b-z\right|\right|_2
\leq
\frac{\epsilon}{2},
$$
so the point $a_z$ is defined, and
$$
\left|\left|a-a_z\right|\right|_2
\leq
\left|\left|a-z\right|\right|_2+\left|\left|z-a_z\right|\right|_2
\leq
\epsilon.
$$
Thus
$$
N_\epsilon(A(r))
\leq
N_{\epsilon/4}(\Sigma_k).
$$

If $k=N$, then $\Sigma_k$ is the unit sphere in ${\mathbb C}^N$. The standard volumetric estimate gives
$$
\log N_{\epsilon/4}(\Sigma_k)
\leq
2N\log\left(\frac{C}{\epsilon}\right).
$$
Since $k=N$, we have
$$
N
\leq
\left\lceil\frac{256r^2}{\epsilon^2}\right\rceil
\leq
257\frac{r^2}{\epsilon^2}.
$$
Hence
$$
\log N_\epsilon(A(r))
\leq
C\frac{r^2}{\epsilon^2}
\log\left(\frac{C}{\epsilon}\right),
$$
which is bounded by the asserted upper estimate.

We may therefore assume that $k<N$. Then
$$
k
=
\left\lceil\frac{256r^2}{\epsilon^2}\right\rceil
\leq
257\frac{r^2}{\epsilon^2}.
$$
The covering estimate for $\Sigma_k$ gives
$$
\log N_{\epsilon/4}(\Sigma_k)
\leq
k\log\left(\frac{eN}{k}\right)
+
2k\log\left(\frac{C}{\epsilon}\right).
$$
Since $k\geq r^2$, we have
$$
\log\left(\frac{eN}{k}\right)
\leq
\log\left(\frac{eN}{r^2}\right).
$$
Combining these estimates yields
$$
\log N_\epsilon(A(r))
\leq
C\frac{r^2}{\epsilon^2}
\left(
\log\left(\frac{eN}{r^2}\right)
+
\log\left(\frac{C}{\epsilon}\right)
\right).
$$

We now prove the lower bound. Set
$$
k=\left\lceil r^2\right\rceil.
$$
Then $1\leq k\leq N$ and
$$
r
\leq
\sqrt{k}
\leq
\sqrt{r^2+1}
\leq
\sqrt{2}\,r
\leq
2r.
$$
By Lemma \ref{greedy-construction}, there exists a set ${\mathcal P}_k\subset\Sigma_k$ such that every vector in ${\mathcal P}_k$ has exactly $k$ nonzero coordinates, all of magnitude $k^{-1/2}$, distinct vectors in ${\mathcal P}_k$ have distance at least $1$, and
$$
\log \left|{\mathcal P}_k\right|
\geq
c k\log\left(\frac{eN}{k}\right).
$$
Every vector in ${\mathcal P}_k$ has $\ell^1$ norm equal to $\sqrt{k}$, and therefore ${\mathcal P}_k\subset A(r)$. Hence, for every $0<\epsilon<1/2$,
$$
\log N_\epsilon(A(r))
\geq
\log |{\mathcal P}_k|
\geq
c k\log\left(\frac{eN}{k}\right).
$$
The function
$$
t\longmapsto t\log\left(\frac{eN}{t}\right)
$$
is nondecreasing on $[1,N]$. Since $k\geq r^2$, it follows that
$$
k\log\left(\frac{eN}{k}\right)
\geq
r^2\log\left(\frac{eN}{r^2}\right).
$$
Thus
$$
\log N_\epsilon(A(r))
\geq
c r^2\log\left(\frac{eN}{r^2}\right)
$$
for every $0<\epsilon<1/2$. This proves the lower bound with $\epsilon_0=1/2$ and completes the proof.
\end{proof}

\section{Sampling complexity via metric entropy}

Let $\mu$ denote the uniform probability measure on ${\mathbb Z}_N$, that is,
$$
\mu(E)=\frac{|E|}{N}.
$$

For $\omega=(x_1,\dots,x_m)\in({\mathbb Z}_N)^m$, define
$$
\left|\left|f\right|\right|_{L^2(\mu_\omega)}^2
=
\frac{1}{m}\sum_{j=1}^m |f(x_j)|^2.
$$

\begin{theorem}\label{sampling-theorem}
Let $0<\delta<1$ and $0<\rho<1$. There exists an absolute constant $C>0$ such that if
$$
m
\geq
C\frac{r^4}{\delta^2}
\left(
\log N_{\delta/(16r\sqrt{N})}(C_2(r),\left|\left|\cdot\right|\right|_2)
+
\log\frac{1}{\rho}
\right),
$$
then, with probability at least $1-\rho$ with respect to $\mu^{\otimes m}$,
$$
\sup_{f\in C_2(r)}
\left|
\frac{N}{m}\sum_{j=1}^m |f(x_j)|^2
-
\sum_{x\in{\mathbb Z}_N}|f(x)|^2
\right|
\leq
\delta.
$$
Since every $f\in C_2(r)$ satisfies $\left|\left|f\right|\right|_{\ell^2}=1$, the second term in the preceding display equals $1$.
\end{theorem}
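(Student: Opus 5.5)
The plan is a standard net-plus-union-bound argument: Hoeffding for each net point, then a deterministic transfer to all of $C_2(r)$ using the uniform sup-norm bound that the Fourier ratio provides.

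\textbf{Step 1: uniform bound.} For $f\in C_2(r)$, Fourier inversion gives
$$
|f(x)| \le N^{-1/2}\left|\left|\widehat f\right|\right|_{\ell^1} \le 2rN^{-1/2}.
$$
Hence the random variables $Y_j = N|f(x_j)|^2$ take values in $[0,4r^2]$ and have mean $\sum_x |f(x)|^2 = 1$. The same reasoning gives
$$
|f(x)-g(x)| \le N^{-1/2}\left|\left|\widehat f-\widehat g\right|\right|_{\ell^1} \le \left|\left|f-g\right|\right|_2
$$
by Cauchy--Schwarz on $\mathbb Z_N$. This is the source of the $\sqrt N$ in the covering scale.

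\textbf{Step 2: concentration on a net.} Let $\eta=\delta/(16r\sqrt N)$, and let $\mathcal N$ be an $\eta$-net for $C_2(r)$ with centers in $C_2(r)$ and cardinality $N_\eta(C_2(r))$. For fixed $g\in\mathcal N$, Hoeffding's inequality for variables in $[0,4r^2]$ gives
$$
\mathbb P\left(\left|\frac Nm\sum_j|g(x_j)|^2-1\right|>\frac\delta2\right)\le 2\exp\left(-\frac{c\,m\delta^2}{r^4}\right).
$$
A union bound over $\mathcal N$ shows that the failure probability is at most $2|\mathcal N|\exp(-cm\delta^2/r^4)\le\rho$, provided $m\ge C r^4\delta^{-2}(\log|\mathcal N|+\log(1/\rho))$. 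This is exactly the hypothesis of the theorem.

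\textbf{Step 3: extension to all of $C_2(r)$.} This deterministic step is the main obstacle. Given $f\in C_2(r)$, pick $g\in\mathcal N$ with $\|f-g\|_2\le\eta$. Write
$$
\big||f(x)|^2-|g(x)|^2\big| \le |f(x)-g(x)|\,(|f(x)|+|g(x)|) \le |f(x)-g(x)|\cdot 4rN^{-1/2}.
$$
A pointwise bound is needed on the empirical side, since the sampled points are arbitrary. Using the crude bound $|f(x)-g(x)|\le\|f-g\|_2\le\eta$ from Step 1 gives
$$
\frac Nm\sum_j\big||f(x_j)|^2-|g(x_j)|^2\big| \le N\cdot\eta\cdot 4rN^{-1/2} = 4r\sqrt N\,\eta = \frac\delta4.
$$
The population terms are both equal to $1$, so they contribute nothing. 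Combining this with Step 2 gives the total deviation $\le\delta/2+\delta/4\le\delta$ on the good event.

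The point to check carefully is the one place where the bound must survive the factor $N$ in the empirical normalization. This is why the covering scale must be $\delta/(r\sqrt N)$ rather than $\delta$: the crude $\ell^\infty\le\ell^2$ bound loses a $\sqrt N$, and the $r N^{-1/2}$ sup bound recovers only part of it. With the stated scale the constants close with room to spare.
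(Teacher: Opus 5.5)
Your proposal is correct and follows the paper's proof essentially step for step. Both arguments use the sup-norm bound $\|f\|_\infty\le 2r/\sqrt N$, then Hoeffding at level $\delta/2$ with a union bound over a net at scale $\eta=\delta/(16r\sqrt N)$, and finally the deterministic transfer $N\bigl||f(x)|^2-|g(x)|^2\bigr|\le 4r\sqrt N\,\eta=\delta/4$ via $\|f-g\|_\infty\le\|f-g\|_2$. The one detail you (like the paper) skip is absorbing the union-bound factor $2$ into $C$; this is harmless because $C_2(r)$ is invariant under $f\mapsto -f$, so its covering number at any scale $\eta<1$ is at least $2$.
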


\begin{proof}
If $f\in C_2(r)$, then $\left|\left|\widehat f\right|\right|_2=1$ and $\left|\left|\widehat f\right|\right|_1\leq 2r$. By Fourier inversion,
$$
f(x)
=
N^{-1/2}
\sum_{m\in{\mathbb Z}_N}
e^{2\pi i xm/N}\widehat f(m),
$$
and therefore
$$
\left|\left|f\right|\right|_{L^\infty}
\leq
N^{-1/2}\left|\left|\widehat f\right|\right|_{\ell^1}
\leq
\frac{2r}{\sqrt N}.
$$
Hence
$$
0\leq N|f(x)|^2\leq 4r^2
$$
for every $x\in{\mathbb Z}_N$.

Fix $g\in C_2(r)$. Since
$$
\mathbb E\bigl[N|g(x_j)|^2\bigr]
=
\sum_{x\in{\mathbb Z}_N}|g(x)|^2
=
1,
$$
Hoeffding's inequality gives
$$
\mathbb P
\left(
\left|
\frac{N}{m}\sum_{j=1}^m |g(x_j)|^2
-
\sum_{x\in{\mathbb Z}_N}|g(x)|^2
\right|
>
\frac{\delta}{2}
\right)
\leq
2\exp\left(-c\frac{m\delta^2}{r^4}\right).
$$

Let ${\mathcal N}$ be an $\alpha$-net for $C_2(r)$ in $\ell^2$, where
$$
\alpha=\frac{\delta}{16r\sqrt N}.
$$
Applying the preceding estimate to each $g\in{\mathcal N}$ and using the union bound, we obtain that, outside an exceptional set of probability at most
$$
2|{\mathcal N}|\exp\left(-c\frac{m\delta^2}{r^4}\right),
$$
one has
$$
\left|
\frac{N}{m}\sum_{j=1}^m |g(x_j)|^2
-
\sum_{x\in{\mathbb Z}_N}|g(x)|^2
\right|
\leq
\frac{\delta}{2}
$$
for every $g\in{\mathcal N}$.

Now fix $f\in C_2(r)$ and choose $g\in{\mathcal N}$ such that
$$
\left|\left|f-g\right|\right|_{\ell^2}\leq\alpha.
$$
Since the Fourier transform is unitary,
$$
\left|\left|\widehat f-\widehat g\right|\right|_{\ell^2}
=
\left|\left|f-g\right|\right|_{\ell^2}
\leq
\alpha.
$$
By Fourier inversion and Cauchy--Schwarz,
$$
\left|\left|f-g\right|\right|_{L^\infty}
\leq
\left|\left|\widehat f-\widehat g\right|\right|_{\ell^2}
\leq
\alpha.
$$
Also,
$$
\left|\left|f\right|\right|_{L^\infty},\left|\left|g\right|\right|_{L^\infty}
\leq
\frac{2r}{\sqrt N}.
$$
Therefore, for every $x\in{\mathbb Z}_N$,
$$
N\bigl||f(x)|^2-|g(x)|^2\bigr|
\leq
N\bigl(|f(x)|+|g(x)|\bigr)|f(x)-g(x)|
\leq
4r\sqrt N\,\alpha
=
\frac{\delta}{4}.
$$
It follows that
$$
\left|
\frac{N}{m}\sum_{j=1}^m
\bigl(|f(x_j)|^2-|g(x_j)|^2\bigr)
\right|
\leq
\frac{\delta}{4}.
$$
Moreover, since $\left|\left|f\right|\right|_{\ell^2}=\left|\left|g\right|\right|_{\ell^2}=1$,
$$
\sum_{x\in{\mathbb Z}_N}
\bigl(|f(x)|^2-|g(x)|^2\bigr)
=
0.
$$
Combining these estimates with the estimate for $g\in{\mathcal N}$ gives
$$
\left|
\frac{N}{m}\sum_{j=1}^m |f(x_j)|^2
-
\sum_{x\in{\mathbb Z}_N}|f(x)|^2
\right|
\leq
\frac{3\delta}{4}
\leq
\delta.
$$
Since $f\in C_2(r)$ was arbitrary, the desired uniform estimate follows.

Finally, choosing $m$ so that
$$
2|{\mathcal N}|
\exp\left(-c\frac{m\delta^2}{r^4}\right)
\leq
\rho
$$
ensures that the exceptional set has probability at most $\rho$. This condition is implied by the stated lower bound on $m$, after adjusting the absolute constant $C$. This completes the proof.
\end{proof}

\section{Conclusion and future work}

The results of this paper demonstrate that the Fourier ratio $FR(f)$ governs not only the approximation-theoretic properties of individual signals but also the metric entropy and uniform sampling behavior of the associated function classes. The quantity $FR(f)^2$ emerges as an effective dimension parameter controlling the geometric size of Fourier-ratio layers.

Several natural questions remain open for future investigation.

\begin{enumerate}
\item \textbf{Dependence on the covering scale.} At every sufficiently small fixed covering scale, the upper and lower entropy bounds match in their dependence on $r$ and $N$, up to absolute constants. It remains open to determine the sharp dependence of $\log N_\epsilon(C_2(r))$ on $\epsilon$ as $\epsilon$ tends to zero.

\item \textbf{Continuous and infinite-dimensional settings.} Our arguments extend directly to finite abelian groups, but what about functions on $[0,1]$ with bandlimited Fourier transform? More generally, for orthonormal systems $\{\phi_j\}_{j=1}^\infty$ with $\left|\left|\phi_j\right|\right|_\infty \le B$, the truncation argument still works, but the covering estimates become more delicate due to infinite dimensionality.

\item \textbf{Phase-orbit packing beyond flat blocks.} The phase-orbit packing result (Lemma \ref{phase-orbit-packing}) assumes a flat spectral block. Can one obtain similar lower bounds under weaker conditions, such as a lower bound on the $\ell^4$ norm of the Fourier coefficients?

\item \textbf{Algorithmic implications.} The metric entropy bounds suggest that empirical risk minimization over $C_2(r)$ should have favorable sample complexity. Developing efficient algorithms that achieve these bounds is an important practical direction.
\end{enumerate}


\vskip.25in 

\begin{thebibliography}{99}

\bibitem{AldalehEtAl}
K.~Aldaleh, W.~Burstein, G.~Garza, G.~Hart, A.~Iosevich, J.~Iosevich,
A.~Khalil, J.~King, N.~Kulkarni, T.~Le, I.~Li, A.~Mayeli, B.~McDonald,
K.~Nguyen, and N.~Shaffer,
\textit{The Fourier Ratio and Quantitative Trigonometric Approximation},
arXiv:2511.19560, 2026.

\bibitem{AnthonyBartlett}
M.~Anthony and P.~Bartlett,
\textit{Neural Network Learning: Theoretical Foundations},
Cambridge University Press, Cambridge, 1999.

\bibitem{BoucheronLugosiMassart}
S.~Boucheron, G.~Lugosi, and P.~Massart,
\textit{Concentration Inequalities},
Oxford University Press, Oxford, 2013.

\bibitem{BursteinIosevichNathan}
W.~Burstein, A.~Iosevich, and H.~S.~Nathan,
\textit{The Fourier Ratio: A Unifying Measure of Complexity for Recovery, Localization, and Learning},
arXiv:2601.16345, 2026.

\bibitem{FoucartRauhut}
S.~Foucart and H.~Rauhut,
\textit{A Mathematical Introduction to Compressive Sensing},
Birkh\"auser, New York, 2013.

\bibitem{Goerg2013}
G.~M.~Goerg,
\textit{Forecastable Component Analysis},
Proceedings of the 30th International Conference on Machine Learning (ICML),
2013, pp.~64--72.

\bibitem{IosevichGupta}
A.~Iosevich and V.~Gupta,
\textit{Large values in time series and additive combinatorics},
arXiv:2604.21292, 2026.

\bibitem{Vershynin}
R.~Vershynin,
\textit{High-Dimensional Probability: An Introduction with Applications in Data Science},
Cambridge University Press, Cambridge, 2018.

\end{thebibliography}
\end{document}